\newtheorem{thm}{Theorem}
\newtheorem{cor}{Corollary}
\newtheorem{lem}{Lemma}
\newtheorem{prob}{Problem}
\theoremstyle{definition}
\newtheorem*{xrem}{Remark}
\newtheorem*{mainthm}{Main Theorem}
\newtheorem*{hist}{Historical Note}
\numberwithin{equation}{section}
\begin{document}

\title{Non-meagre subgroups of reals disjoint with meagre sets}

\author{
Ziemowit Kostana\\
Faculty of Mathematics, Informatics and Mechanics\\ 
Warsaw University\\
02-097 Warszawa, Poland\\
E-mail: z.kostana@mimuw.edu.pl}

\date{10.2017}

\maketitle


\renewcommand{\thefootnote}{}

\footnote{2010 \emph{Mathematics Subject Classification}: Primary 28A05, 54E52.}

\footnote{\emph{Key words and phrases}: non-measurable subgroup, Baire property, algebraic sum.}

\renewcommand{\thefootnote}{\arabic{footnote}}
\setcounter{footnote}{0}


\begin{abstract}
Let $(X,+)$ denote $(\mathbb{R}, +)$ or $(2^\omega, +_2)$. We prove that for any meagre set $F \subseteq X$ there exists a subgroup $G \le X$ without the Baire property, disjoint with some
translation of $F$. We point out several consequences of this fact and indicate why analogous result for the measure cannot be established in ZFC. We extend proof techniques from \cite{r-s}.
\end{abstract}

\section{Historical Background}

For sets $A,B \subseteq \mathbb{R}$ we define the algebraic sum $A+B=\{a+b|\, a\in A, \, b \in B\}$. Study of algebraic sums of this kind has been
around for almost a century. The first result in this topic seems to be due to Sierpi\'nski, who proved in 1920 that there exists two sets of measure zero whose sum is non-measurable \cite{Sie}. Rubel \cite{Rub} showed that these two sets can be chosen to be equal. This result was later generalized in many directions. For example, related results for other $\sigma$-ideals were obtained by Kharazishvili \cite{Kha} and by Cicho\'n and Jasi\'nski \cite{C-J}. In another direction, Ciesielski, Fejzi\'c and Freiling \cite{C-F} proved among others, that for every set $C\subseteq \mathbb{R}$, there exists a set $A \subset C$ such that $\lambda_*(A+A)=0$ and $\lambda^*(A+A)=\lambda^*(C+C)$, where $\lambda_*$ and $\lambda^*$ denote the inner and the outer Lebesgue measure respectively (but for simpler proof see the work by Marcin Kysiak \cite{kys}). It is also worth to mention the famous Erd{\"o}s-Kunen-Mauldin theorem \cite{EKM}.
\par It is easy to see that the sum of compact (open) sets is compact (open), the sum of $F_\sigma$ sets is $F_\sigma$, but for higher Borel classes this is not the case \cite{C-J}. Even the sum of a compact set with $G_\delta$ doesn't have to be a Borel set, and this was shown by Sodnomow in 1954 \cite{Sod} and independently by Erd\"os and Stone in 1970 \cite{S-E}.
\par The study of algebraic sums of subsets of real line is closely related to the study of additive subgroups of $(\mathbb{R},+)$. Erd\"os proved, that under CH there exists a non-meagre, null additive subgroup of reals, as well as a non-measurable, meagre additive subgroup of reals \cite{Erd}. The same can be proved under MA, but somehow surprisingly, while non-meagre subgroups of measure zero always exists, some additional set-theoretic assumption turns out to be necessary to prove the existence of a subgroup which is non-measurable and meagre. This was proved recently by Ros{\l}anowski and Shelah \cite{r-s}.

\par Results of this paper were obtained during the work on the author's Master's Thesis. Author would like to thank his advisor, dr. Marcin Kysiak, as well as prof. Witold Marciszewski and prof. Piotr Zakrzewski for many helpful remarks and suggestions.

\section{Preliminaries}

In private communication Sergei Akbarov posed the following problem, sometimes referred to as the Akbarov Problem.
\begin{prob}
Let $A\subseteq \mathbb{R}$ be a nonempty null set. Does there exist a set $B\subseteq \mathbb{R}$ with the property that $A+B$ is Lebesgue non-measurable?
\end{prob}
 One of the natural ways to approaching such problem is to try to find a non-measurable dense subgroup $G \le \mathbb{R}$ disjoint with some translation of $A$.
 Indeed, assume we have a non-measurable dense subgroup $G \le \mathbb{R}$, and $(A+v) \cap G =\emptyset$ holds. Then also 
 $$(A+v)\cap (G-G)=\emptyset,$$
 $$(A+v+G)\cap G=\emptyset.$$
 It is well-known (see for example \cite{buk}, Thm. 7.36) that every dense subgroup of $\mathbb{R}$ is either null or has full outer measure. Both $G$ and $A+v+G$ have full outer measure, hence
 both have inner measure zero, and so are non-measurable. 
 \par This approach, however sufficient to solve the problem with certain additional assumptions like Martin's Axiom, won't work in ZFC alone. In 2016 Andrzej Ros{\l}anowski and Saharon Shelah proved
the following theorem \cite{r-s}.
\begin{thm}
 It is relatively consistent with ZFC that any meagre subgroup of reals is null.
 \end{thm}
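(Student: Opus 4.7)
The plan is to produce a model of ZFC by forcing in which no meagre subgroup of $\mathbb{R}$ has positive outer measure; combined with the full-outer-measure/null dichotomy for dense subgroups mentioned above, this gives the conclusion. I would start from a ground model of GCH and construct a countable support iteration $\langle \mathbb{P}_\alpha, \dot{\mathbb{Q}}_\alpha : \alpha < \omega_2\rangle$ of proper forcings of length $\omega_2$. At each step, the iterand $\dot{\mathbb{Q}}_\alpha$ will be a tailored tree forcing in the style of Ros\l{}anowski--Shelah's ``norms on possibilities'' framework, designed to kill one specific potential counterexample.

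The bookkeeping step would enumerate, using a $\Diamond$-like sequence on $\omega_2$ together with the $\omega_2$-c.c.\ of the iteration, all pairs $(\dot G_\alpha, \dot F_\alpha)$ in intermediate models where $\dot F_\alpha$ is a name for a meagre $F_\sigma$ set and $\dot G_\alpha$ is a name for a subgroup contained in $\dot F_\alpha$. At stage $\alpha$ the forcing $\dot{\mathbb{Q}}_\alpha$ is arranged to add a real coding a null $G_\delta$ which covers $\dot G_\alpha$ (equivalently, it shoots a witness that the assigned subgroup has outer measure zero). Because every meagre subgroup in the final model is, by the $\omega_2$-c.c., witnessed at some intermediate stage $\alpha < \omega_2$, the bookkeeping catches every potential counterexample.

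The technical heart, and the main obstacle, is a preservation theorem for the iteration: one must show that no new \emph{bad} subgroup, meagre but with positive outer measure, is created either at an individual step or in the limit of the iteration. This means verifying two things at once for the iterand. First, $\dot{\mathbb{Q}}_\alpha$ must be proper and ``sufficiently tame'' with respect to outer measure -- roughly, it must not allow a new $F_\sigma$ meagre set to cover a ground-model subgroup that previously had positive outer measure. Second, the chosen tree-forcing norms must satisfy a fusion-friendly property ensuring the countable support iteration preserves this kind of statement; this is where one typically invokes a preservation theorem of the form ``iterands do not add a random real over a fixed sequence of names,'' adapted to the subgroup setting.

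Assembling the verification is the really delicate part: it requires identifying a combinatorial norm on conditions, carrying out a fusion argument at each iterand, and checking that the preservation property persists through countable support limits. Once this scaffolding is in place, the consistency result follows: in $V^{\mathbb{P}_{\omega_2}}$, any meagre subgroup $G$ arises at some stage $\alpha$, was therefore targeted at some later stage $\beta \geq \alpha$, and hence is forced into a null $G_\delta$ constructed at stage $\beta$, so that $G$ is null in the final model.
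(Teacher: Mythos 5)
This theorem is not proved in the paper at all: it is quoted from Ros{\l}anowski and Shelah \cite{r-s}, and the Historical Note records the known route --- Friedman and Shelah showed that after adding $\omega_2$ Cohen reals to a model of CH, every $F_\sigma$ subset of the plane that contains a rectangle of positive outer measure contains a rectangle of positive measure, and Burke \cite{burke} derives the present theorem from that (apply it to $E=\{(x,y):x+y\in F\}\supseteq G\times G$, where $F\supseteq G$ is meagre $F_\sigma$, then use Steinhaus to contradict $F$ having empty interior). Your proposal heads in a different direction, but as written it is a template rather than a proof: the iterands are never defined, and the two items you yourself flag as ``the technical heart'' and ``the really delicate part'' --- the preservation theorem for the countable support iteration and the fusion arguments for the norms --- are precisely where all of the mathematical content lives. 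Deferring them means nothing has been established.

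Beyond the deferred content, the bookkeeping scheme has a structural flaw. In your model the continuum is $\omega_2$, so a subgroup $G$ of the final model can have size $\omega_2$ and in general appears in no intermediate model $V^{\mathbb{P}_\alpha}$; only objects coded by a single real (such as the $F_\sigma$ set $\dot F_\alpha$) reflect to an intermediate stage via the $\omega_2$-c.c. Hence the claim that ``every meagre subgroup in the final model is witnessed at some intermediate stage'' fails as stated: at best you catch the trace $G\cap V^{\mathbb{P}_\alpha}$, and covering that trace by a null $G_\delta$ at some stage $\beta$ says nothing about the elements of $G$ added after stage $\beta$. To repair this you would need a genuine reflection-plus-preservation statement (if $G$ is non-null in the final model, then cofinally many of its traces are non-null in the intermediate models and their null covers persist), which is exactly the part you have not supplied; alternatively, abandon stage-by-stage killing and argue from a single global property of the final model, as Burke does with the rectangle-inclusion theorem. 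The latter is the proof that actually exists in the literature, and I would recommend following it.
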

Indeed, consider a dense $G_\delta$ null subset of reals. Any subgroup disjoint with its translation must be meagre. So, consistently, also null.
\par In the case of Baire category however, situation is different. The following is the main result of this paper.
\begin{mainthm}
Let $X=(\mathbb{R},+)$ or $X=(2^\omega, +_2)$. For any meagre set $F \subseteq X$, there exists $x\in X$, and a dense subgroup $H \le X$ without the~Baire property such that $(F+x)\cap H=\emptyset$.
\end{mainthm}
From this follows the affirmative answer to the category version of Problem 1.
\begin{cor}
For any meagre set $A\subseteq X$, there exist a set $B \subseteq X$ such that $A+B$ doesn't have the Baire property.
\end{cor}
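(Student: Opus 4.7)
The plan is to read the corollary off the Main Theorem by extracting $B$ from the witness subgroup. Assume $A$ is nonempty, as the statement is vacuous otherwise. By the Main Theorem, fix $x \in X$ and a dense subgroup $H \le X$ without the Baire property such that $(A+x) \cap H = \emptyset$. I would take $B := x + H$ and claim that $S := A + B = A + x + H$ fails to have the Baire property.

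Two structural facts drive the argument. First, $S$ is invariant under translation by $H$, since $B = x + H$ is. Second, $S \cap H = \emptyset$: if $a + x + h \in H$ with $a \in A$ and $h \in H$, then $a + x \in H$ because $H$ is a subgroup, contradicting the Main Theorem. Now suppose for contradiction that $S$ has the Baire property. The key input is a Baire-category 0-1 law: any Baire-measurable set invariant under a dense subgroup of translations of $X$ is either meagre or comeagre. If $S$ were meagre, then so would be $a_0 + x + H \subseteq S$ for any chosen $a_0 \in A$; but this is a translate of $H$, and $H$ is non-meagre (since it lacks the Baire property, while meagre sets automatically have it), giving a contradiction. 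If instead $S$ were comeagre, then $H \subseteq X \setminus S$ would be meagre, again a contradiction. Hence $S$ has no Baire property.

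The only nontrivial step is establishing the 0-1 law. I would prove it in the standard way: writing $S = U \triangle M$ with $U$ open and $M$ meagre, the $H$-invariance of $S$ yields that $U + h$ agrees with $U$ modulo meagre for every $h \in H$; then, assuming $U$ nonempty, one picks a basic open $V \subseteq U$, uses density of $H$ together with the Lindelöf property of $X$ to extract $h_n \in H$ with $X = \bigcup_n (V + h_n)$, and concludes that $X$ agrees with $U$ modulo meagre, so $U$ (and hence $S$) is comeagre. This is the only potential obstacle, but it is standard, so the corollary is genuinely a brief consequence of the Main Theorem.
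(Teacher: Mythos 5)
Your proposal is correct and follows essentially the same route as the paper: the paper's one-line proof takes $B$ to be the dense subgroup $H$ without the Baire property disjoint from a translation of $A$, and your choice $B = x+H$ differs only by a translation. The details you supply (the $H$-invariance of $A+B$, its disjointness from $H$, and the topological $0$--$1$ law for sets invariant under a dense group of translations) are exactly the implicit content of the paper's argument, mirroring the measure-theoretic version spelled out in the Preliminaries.
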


\begin{proof}
Just take as $B$ a dense subgroup without the Baire property, which is disjoint with a translation of $A$.
\end{proof}
Another consequence is
\begin{cor}
	There exists a null subgroup of $X$ which is not meagre.
\end{cor}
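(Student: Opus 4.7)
The plan is to deduce this as an immediate application of the Main Theorem, exploiting the classical measure/category duality which lets us choose a meagre set that is \emph{large} measure-theoretically.

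First I would invoke the standard fact that $X$ decomposes as a disjoint union of a meagre set and a null set (for $\mathbb{R}$ this is the textbook construction of a meagre $F_\sigma$ of full Lebesgue measure; for $2^\omega$ with Haar measure the same argument works, since both structures are Polish groups with measure and category behaving dually). Let $F \subseteq X$ be meagre with $X \setminus F$ null.

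Next I would apply the Main Theorem to this $F$: it delivers some $x \in X$ and a dense subgroup $H \le X$ lacking the Baire property with $(F+x) \cap H = \emptyset$. By translation invariance of the measure on $X$, the set $F+x$ still has a null complement, so $H \subseteq X \setminus (F+x)$ is contained in a null set and hence $H$ itself is null.

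Finally I would note that $H$ cannot be meagre: every meagre set has the Baire property, whereas the Main Theorem guarantees that $H$ does not. This is the whole proof; there is no serious obstacle, since the nontrivial content -- constructing a non-Baire subgroup disjoint from a translate of a prescribed meagre set -- has already been absorbed into the Main Theorem, and we only have to choose that meagre set to carry full measure.
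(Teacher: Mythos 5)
Your proof is correct and is exactly the paper's argument: the paper's one-line proof (``take $F$ in Theorem 2 of full measure'') is precisely your choice of a meagre $F$ with null complement, followed by the same observations that $H\subseteq X\setminus(F+x)$ is null and that lacking the Baire property rules out meagreness. You have merely spelled out the details the paper leaves implicit.
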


This was firstly proved by Talagrand \cite{tal}, and more recently by Ros{\l}anowski and Shelah \cite{r-s}.

\begin{proof}
	Just take $F$ in Theorem 2 of full measure.
\end{proof}

 In the whole paper $X$ denote the group of reals $(\mathbb{R},+)$ or the Cantor space $(2^\omega, +_2)$ with coordinate-wise addition modulo 2. Instead of $A+\{x\}$, we write $A+x$. $\mathcal{M}$ and $\mathcal{N}$ denote $\sigma$-ideals of meagre and null sets respectively.
A partition of $\omega$ is always a partition on finite intervals.
 \par The following quantifiers are commonly used in the infinite combinatorics: 
 \begin{itemize}
\item $\forall^\infty_{n<\omega}\; \psi(n)$, denoting ``$\psi$ holds for sufficiently large $n$'';
 \item $\exists^\infty_{n<\omega}\; \psi(n)$, denoting ``$\psi$ holds for infinitely many $n$''.
\end{itemize}
Here, one more similar notation will prove useful. Let $\textswab{U}$ be a fixed non-principial ultrafilter on $\omega$. Expression
$$\textswab{U}_{n<\omega} \; \psi(n)$$ will mean
$$\{n| \; \psi(n)\} \in \textswab{U}.$$
$\textswab{U}$ can be seen as something between $\forall^\infty$, and $\exists^\infty$. If $\forall^\infty_{n<\omega}\; \psi(n)$ and $\forall^\infty_{n<\omega}\; \phi(n)$, then clearly
$\forall^\infty_{n<\omega}\; \psi(n) \wedge\phi(n)$, and this is not the case for $\exists^\infty$. On the other hand, for any $\psi$ either $\exists^\infty_{n<\omega} \psi(n)$ or $\exists^\infty_{n<\omega} \neg \psi(n)$, and this is not the case for $\forall^\infty$.
It is straightforward from the definition of a non-principial ultrafilter that for $\textswab{U}$ both mentioned conditions holds.

 \par The following combinatorial characterization of meagre sets in $2^\omega$, due to Bartoszy\'nski (for the proof see \cite{b-j}, Thm. 2.2.4), will be crucial in our considerations.  
\begin{thm} Every meagre subset of $2^\omega$ is contained in a meagre set \\of the form
 
$$F=\{x \in 2^\omega |\, \forall^\infty_{n<\omega}\, x \restriction{I_n} \neq v \restriction{I_n}\},$$
where $\{I_n\}_{n<\omega}$ is a partition of $\omega$, and $v\in 2^\omega$.
 \end{thm}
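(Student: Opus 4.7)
My plan is to reduce to a countable, increasing union of closed nowhere-dense sets and then to construct the partition $\{I_n\}$ and the element $v$ simultaneously by recursion on $n$, handling at stage $n$ only the finitely many ``threats'' available at that level.

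Write the given meagre set as a subset of $\bigcup_{k<\omega} F_k$ with each $F_k$ closed, nowhere dense and $F_k \subseteq F_{k+1}$. It suffices to build $\{I_n\}$ and $v$ so that for every $k$ and every $x \in F_k$, $x \restriction I_n \neq v \restriction I_n$ whenever $n \geq k$; this puts $F_k$, and hence the whole union, into the set $F$ of the statement.

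I would construct $0 = a_0 < a_1 < \ldots$ and $v \restriction [0, a_n)$ inductively. At stage $n$, given $a_n$ and $v \restriction [0, a_n)$, enumerate the (finitely many) pairs $(s, k) \in 2^{a_n} \times \{0, \ldots, n\}$ and process them one at a time. Maintain a finite string $w$, initially equal to $v \restriction [0, a_n)$, together with the invariant that for every already-processed pair $(s, k)$ the basic clopen $[s \cdot (w \restriction [a_n, |w|))]$ is disjoint from $F_k$. To handle the next pair $(s, k)$: if this already holds, do nothing; otherwise, apply nowhere-density of $F_k$ inside this clopen to find an extension of $s \cdot (w \restriction [a_n, |w|))$ whose basic clopen misses $F_k$, and append the corresponding bits to $w$. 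The crucial point is that extending $w$ only shrinks the basic clopens involved, so the invariants already secured are preserved automatically. When all pairs have been processed, set $a_{n+1} := |w|$, define $v$ on $[a_n, a_{n+1})$ to be $w \restriction [a_n, a_{n+1})$, and let $I_n := [a_n, a_{n+1})$.

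Verification is then immediate: if $x \in F_k$ and $n \geq k$, letting $s := x \restriction [0, a_n)$ we have $[s \cdot (v \restriction I_n)] \cap F_k = \emptyset$, which forces $x \restriction I_n \neq v \restriction I_n$. The step I expect to be the subtle one is the choice of $v \restriction I_n$ itself: a single string must defeat each $F_k$ uniformly over all $s \in 2^{a_n}$, whereas nowhere-density a priori only supplies $s$-dependent extensions and the projection of $F_k$ onto $I_n$ could perfectly well fill all of $2^{I_n}$. The remedy is the sequential processing above, which uses closedness of $F_k$ (to obtain basic clopens, not just single points, disjoint from $F_k$) together with the monotonicity of basic clopens under extension, so that constraints accumulate without interference.
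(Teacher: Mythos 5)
Your proof is correct and is essentially the standard argument for Bartoszy\'nski's characterization, which the paper itself does not prove but cites to Bartoszy\'nski--Judah (Thm.~2.2.4): the sequential processing of the finitely many pairs $(s,k)$ at stage $n$, relying on the fact that extending $w$ only shrinks the basic clopen sets already secured, is exactly the right mechanism for getting one string $v\restriction I_n$ to defeat all $s\in 2^{a_n}$ simultaneously. The only cosmetic point is to ensure $a_{n+1}>a_n$ (append one arbitrary bit if no pair forced an extension) so that each $I_n$ is a nonempty interval, and to note that any set of the displayed form is automatically meagre, being a countable union of closed nowhere dense sets.
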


\begin{xrem} It is not hard to see, that once we have the partition $\{I_n\}_{n<\omega}$, we can replace it with a ``thicker'' partition, i.e. one in which the end of every interval lies in some fixed infinite subset of $\omega$.
\end{xrem}

\begin{hist} H. Friedman and S. Shelah independently proved, that if a model $V$ results from adding $\omega_2$ Cohen reals to a model of $CH$, then in $V$ the following holds: if $E$ is an $F_{\sigma}$ subset of $X \times X$, which contains a rectangle of positive outer measure, then it contains a rectangle of positive measure. From this, M. Burke \cite{burke} concludes Theorem 1 as a corollary. For another reference, see \cite{pawl}.
\end{hist}

\section{Main theorem}

We turn to the proof of our main theorem. Firstly, we prove it for $X=2^\omega$, and then for $X=\mathbb{R}$, which will turn out to be more complicated. The following lemma was implicitly used in \cite{r-s} to obtain null, non-meagre subgroup of the Cantor Space, but in fact, there's more we can get from it.

\begin{lem} Let $\{I_n\}_{n<\omega}$ be a partition. Then $G=\{x \in 2^\omega | \, \textswab{U}_{n<\omega} \, x \restriction{I_n} \equiv 0 \}$ is a non-meagre dense subgroup of $2^\omega$.\end{lem}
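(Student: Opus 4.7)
\emph{Proof proposal.}
The plan is to verify three things: $G$ is a subgroup, $G$ is dense, and $G$ is non-meagre. The first two are routine; the nontrivial step is non-meagreness, where I would combine Bartoszy\'nski's characterization (Theorem 3) with the dichotomy provided by $\textswab{U}$.

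For the subgroup property, I would use that $\textswab{U}$ is closed under finite intersections: if $A_x, A_y \in \textswab{U}$ witness $x, y \in G$, then on $A_x \cap A_y \in \textswab{U}$ we have $(x+y)\restriction I_n \equiv 0$. Inverses and the identity are trivial in $(2^\omega,+_2)$. For density, given $s \in 2^{<\omega}$, I would extend $s$ by zeros; the resulting $x \in [s]$ satisfies $x \restriction I_n \equiv 0$ for cofinitely many $n$, and every cofinite set belongs to a non-principal ultrafilter.

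For the main point, suppose toward contradiction that $G$ is meagre. By Theorem 3 there exist a partition $\{J_n\}$ and $v \in 2^\omega$ with $G \subseteq F = \{x : \forall^\infty_n\, x \restriction J_n \neq v \restriction J_n\}$, and by the Remark following it I may assume each $J_n$ is a union of consecutive intervals from $\{I_m\}$, so that $J_n = \bigcup_{m \in K_n} I_m$ for some partition $\{K_n\}$ of $\omega$. I would then split on how $\textswab{U}$ decides the set $B = \{m : v \restriction I_m \equiv 0\}$. If $B \in \textswab{U}$, then $v \in G$ but $v \notin F$, contradiction. Otherwise, exactly one of $A_0 = \bigcup_{n \text{ even}} K_n$, $A_1 = \bigcup_{n \text{ odd}} K_n$ lies in $\textswab{U}$; say $A_0 \in \textswab{U}$. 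Define $x \in 2^\omega$ by $x \restriction I_m = v \restriction I_m$ for $m \in A_1$ and $x \restriction I_m \equiv 0$ for $m \in A_0$. Then $\{m : x \restriction I_m \equiv 0\} \supseteq A_0 \in \textswab{U}$, so $x \in G$; yet for every odd $n$ the inclusion $K_n \subseteq A_1$ forces $x \restriction J_n = v \restriction J_n$, so $x \notin F$, contradicting $G \subseteq F$.

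The only real obstacle is reconciling the two competing demands on the witness $x$: being zero on a $\textswab{U}$-large family of $I_m$'s (to belong to $G$) while agreeing with $v$ on infinitely many whole blocks $J_n$ (to escape $F$). The ultrafilter's dichotomy applied to the even/odd coarsening of $\{K_n\}$ is exactly what makes both possible simultaneously, and the Remark after Theorem 3 is needed only to align the partitions $\{J_n\}$ and $\{I_m\}$ so that the construction has something to latch onto.
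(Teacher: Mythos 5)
Your proof is correct and takes essentially the same route as the paper's: after refining the partition $\{J_n\}$ so that each block is a union of consecutive $I_m$'s, both arguments split $\omega$ into the indices covered by even-indexed versus odd-indexed $J_n$'s, let the ultrafilter pick one side, and build a witness that is zero on the chosen ($\textswab{U}$-large) side and copies $v$ on the other, hence lies in $G$ but agrees with $v$ on infinitely many whole blocks $J_n$. Your preliminary case distinction on $B=\{m: v\restriction I_m\equiv 0\}$ is harmless but unnecessary, since the main construction works regardless of how $\textswab{U}$ decides $B$.
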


\begin{proof} The fact that $G$ is a group is straightforward from properties of the ultrafilter. It is dense, since every sequence eventually equal 0 is in $G$. The non-trivial part is to show that it doesn't have the Baire property. It is well-known (see for example \cite{buk}, Thm. 7.38) that dense, proper subgroups of $2^\omega$ which have the Baire property are meagre, so it's enough to show that group $G$ is not meagre. Consider any $M \in \mathcal{M}(2^\omega)$. By virtue of Theorem 2 we can assume, possibly enlarging $M$, that

$$M=\{x \in 2^\omega |\, \forall_{k<\omega}^{\infty}\, x \restriction{J_k} \neq v \restriction{J_k} \},$$
 for some $v\in 2^\omega$ and a partition $\{J_n\}_{n<\omega}$. Moreover, applying Remark after Theorem 2, we can choose intervals $J_k$ in such a way that each of them is a finite sum of consecutive intervals of the form $I_r$, like below.

$$\overbrace{
\underbrace{0010}_{\text{$I_0$}}\underbrace{010}_{\text{$I_1$}}\underbrace{000111111}_{\text{$I_2$}}
}^{\text{$J_0$}}
\overbrace{
\underbrace{01101010100010}_{\text{$I_3$}}\underbrace{0010000101101}_{\text{$I_4$}}
}^{\text{$J_1$}}
\overbrace{
\underbrace{0001100001000}_{\text{$I_5$}}
\underbrace{11010011}_{\text{$I_6$}}
}^{\text{$J_2$}} \ldots 
$$

 \par Let now $$A_0=\{n <\omega | \, \exists_{r < \omega}\, I_n \subseteq J_{2r}\},$$ and 
 $$A_1=\{n <\omega | \, \exists_{r < \omega}\,  I_n \subseteq J_{2r+1}\}.$$ One of these sets belongs to the ultrafilter $\textswab{U}$. Suppose it's $A_0$.
Then we put  
\begin{equation*}
    x \restriction{I_k}=
    \begin{cases}
      0, & \text{if}\ k \in A_0 \\
      v \restriction{I_k}, & \text{if}\ k \in A_1.
    \end{cases}
  \end{equation*} Similarily, if $A_1 \in \textswab{U}$, we put
\begin{equation*}
    x \restriction{I_k}=
    \begin{cases}
      0, & \text{if}\ k \in A_1 \\
      v \restriction{I_k}, & \text{if}\ k \in A_0.
    \end{cases}
  \end{equation*} In any case, $x$ is constructed in such a way that $\textswab{U}_{n<\omega} \, x \restriction{I_n} \equiv 0$, but also $\exists^\infty_{n<\omega}\; x \restriction{J_n}=v \restriction{J_n}$. In fact, $\{n < \omega | \, x \restriction{J_n}=v\restriction{J_n}\}$ is either the set of even or the set of odd non-negative integers. This means that $x \in G \setminus M$, and since $M$ was arbitrary, this shows that $G \notin \mathcal{M}.$
 \end{proof}

\begin{thm}
For any set $F \in \mathcal{M}$, there exist a $x\in X$ and a dense subgroup $H \le X$ without the Baire property such that $(F+x)\cap H=\emptyset$.
\end{thm}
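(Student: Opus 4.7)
The plan is to treat the two halves of $X$ separately, first the Cantor space (where the result drops out of Lemma~1) and then the real line (where carries cause genuine difficulty).

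\emph{The case $X = 2^\omega$.} By Theorem~2 we may enlarge the meagre set $F$ and assume it already has the canonical form
$$F = \{x \in 2^\omega : \forall^\infty_{n<\omega}\, x\restriction{I_n} \neq v\restriction{I_n}\}$$
for some partition $\{I_n\}_{n<\omega}$ of $\omega$ and some $v \in 2^\omega$. Apply Lemma~1 to this very partition to obtain a dense subgroup $H := G \le 2^\omega$ without the Baire property. Take $x := v$; since $v +_2 v = 0$,
$$F + v = \{z \in 2^\omega : \forall^\infty_{n<\omega}\, z\restriction{I_n} \not\equiv 0\}.$$
Any $z \in H$ satisfies $z\restriction{I_n} \equiv 0$ for $\textswab{U}$-many $n$, hence for infinitely many $n$ (as $\textswab{U}$ is non-principial). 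The two conditions are contradictory, so $(F + v) \cap H = \emptyset$.

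\emph{The case $X = \mathbb{R}$.} A naive transcription of the Cantor argument via base-$b$ expansions fails because real addition produces carries, so a ``digits often zero on $I_n$'' condition need not be preserved under $+$. The plan is therefore: (i) obtain a Bartoszy\'nski-style cover of $F$ of the same combinatorial shape, after passing through a base-$b$ expansion and reducing to the fractional part via the decomposition $\mathbb{R} = \bigsqcup_{k\in\mathbb{Z}}[k,k+1)$; (ii) invoke the Remark after Theorem~2 to thicken the partition $\{I_n\}$, grouping the digit positions into long blocks separated by gaps much larger than the carry range of the elements one plans to put in $H$; (iii) define $H$ as the reals whose base-$b$ digits on $I_n$ vanish for $\textswab{U}$-many $n$, possibly enriched with auxiliary constraints ensuring closure under $+$, and verify that it is dense, non-meagre, without the Baire property, and disjoint from a suitable translate of $F$ by the same $\textswab{U}$-versus-$\forall^\infty$ conflict as in the Cantor case.

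\emph{Main obstacle.} The crux is step (iii): showing the proposed $H$ is genuinely a subgroup of $(\mathbb{R},+)$. Carries can travel across the boundaries of the blocks $I_n$ and spoil the zero-block condition, so the buffers from (ii), together with a careful choice of the base $b$ and of the side conditions defining $H$, must be large enough that the sum of two ``good'' reals remains ``good'' on $\textswab{U}$-many blocks. Once this carry engineering is in place, density is automatic (reals with finite base-$b$ expansion belong to $H$), and the non-meagreness of $H$ is obtained by repeating the $A_0/A_1$ alternation trick from the proof of Lemma~1 in the thickened setting, yielding the required translate-disjointness.
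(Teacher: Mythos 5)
Your $2^\omega$ argument is correct and coincides with the paper's: put $F$ in Bartoszy\'nski canonical form, take the ultrafilter group of Lemma~1 on the same partition, translate by the witness $v$, and observe that ``$\textswab{U}$-many blocks are zero'' contradicts ``eventually no block is zero''. The problem is the real case, where what you give is a plan that defers exactly the step the paper's proof is built to solve. You correctly locate the obstacle (carries destroy closure under addition), but you do not produce a group, and two concrete things in your sketch would fail as stated. First, ``gaps much larger than the carry range'' is not a meaningful safeguard: binary (or base-$b$) carries have no a priori bounded range --- a carry propagates through every position where the digit sum is maximal and is only stopped by a position where a summand has a small digit. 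The paper's Lemma~2 replaces this hope with a precise statement: if $x\restriction[a,b]\equiv 0$ and $y(b)=0$, then the sum agrees with $y$ on $[a,b-1]$, i.e.\ each addition can erode the zero-block by at most one position \emph{provided} a zero digit is available at the right end. To make this loss harmless the paper does not take ``digits vanish on $\textswab{U}$-many blocks'' as the group, but the larger set $\widetilde{G}=\{x\mid \exists_{m}\,\textswab{U}_{n}\, x\restriction[a_n,b_n-m]\equiv 0\}$, where the existential over the shrinkage parameter $m$ absorbs the one-digit-per-addition loss; your ``auxiliary constraints ensuring closure under $+$'' would have to be exactly something of this kind, and you have not supplied it.

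Second, and more seriously, your disjointness step ignores negation. A subgroup of $(\mathbb{R},+)$ contains $-h$ for each of its elements, and under the digit map $\phi$ negation modulo $\mathbb{Z}$ is the bit-flip: $1-\phi(x)=\phi(x^{op})$. So if $h$ has digits $\equiv 0$ on $\textswab{U}$-many blocks, $-h$ has digits $\equiv 1$ there, and nothing in your setup prevents $-h$ from landing in $F+x$; the ``$\textswab{U}$-versus-$\forall^\infty$ conflict'' from the Cantor case only rules out one sign. This is precisely why the paper's translating sequence $v$ is engineered to carry \emph{both} $w$ and $w^{op}$ on each coarse block $J_n$ (plus an alternating tail $0101\ldots$ guaranteeing the zero digit needed to invoke Lemma~2), why the group is taken to be $H=\phi[\widetilde{G}]-\phi[\widetilde{G}]+\mathbb{Z}$, and why the final computation runs through the identity $\phi(x)+\phi(x^{op})=1$ and the absorption property $\phi[\widetilde{G}]+\phi[\widetilde{G}+_2v]+\mathbb{Z}\subseteq\phi[\widetilde{G}+_2v]+\mathbb{Z}$ (Lemma~4). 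Without an analogue of this $w/w^{op}$ mechanism your proposed $H$, even if it were a group, could meet every translate of $F$ through its negative elements, so the proof as proposed has a genuine gap rather than a routine verification left to the reader.
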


\subsection{Version for $2^\omega$} 

Take any $F \in \mathcal{M}(2^\omega)$. Using Theorem 2, we can assume that $$F = \{x \in 2^\omega |\, \forall_{k< \omega}^{\infty}\, x \restriction{I_k} \neq x_F \restriction{I_k} \},$$ where $x_F \in 2^\omega$, and $\{I_n\}_{n<\omega}$ is a partition of $\omega$. 
Then, we only have to notice that $\{x \in 2^\omega | \, \textswab{U}_{n<\omega} \, x \restriction{I_n} \equiv 0 \} \cap (F+_2x_F)=\emptyset$ and use Lemma 1. To this end, see that
$$\{x \in 2^\omega | \, \textswab{U}_{n<\omega} \, x \restriction{I_n} \equiv 0 \} \subseteq \{x \in 2^\omega | \, \exists^\infty_{n<\omega} \, x \restriction{I_n} \equiv 0\},$$
and
$$F+_2x_F \subseteq \{x \in 2^\omega | \, \forall^\infty_{n<\omega} \, x \restriction{I_n} \neq 0 \}.$$

\subsection{Version for $\mathbb{R}$}

 For any (finite or infinite) binary sequence $x$, we denote by $x^{op}$ the sequence obtained from $x$ by changing every $0$ to $1$, and vice versa. Let $D^{\infty}=\{x \in~ 2^\omega |\, \exists^{\infty}_{n<\omega} \, x(n)=0\}$. It is known that there exists the continuous bijection preserving measure and category, $\phi : D^{\infty} \rightarrow [0,1)$, given by the formula
$$\phi(x)=\sum_{i=0}^\infty{\frac{x(i)}{2^{i+1}}}.$$
 
\par Consider any $F \in \mathcal{M(\mathbb{R})}$. Replacing if needed $F$ by $F+\mathbb{Z}$, we can assume that $F+\mathbb{Z}=F$. Let $\widetilde{F}=\phi^{-1}[F \cap [0,1)]$. There exists a $w \in 2^{\omega}$ and a partition of $\omega, \; \{I_n\}_{n<\omega}$, with the property that
\begin{equation} \widetilde{F} \subseteq \{x \in 2^{\omega}|\, \forall^{\infty}_{n<\omega}\, x \restriction{I_n} \neq w \restriction{I_n} \}. \end{equation}

 In this case, we'll need a combinatorial characterization with some stronger properties. We choose another partition of $\omega$, $\{J_n\}_{n<\omega}$, satisfying the following conditions:
\begin{itemize}
\item  each interval $J_n$ is a sum of at least three consecutive intervals $I_k$;
\item  if $J_n=I_{r_0} \cup \ldots \cup I_{r_{k}}$, and indices are increasing, then \\
$|I_{r_2} \cup \ldots \cup I_{r_k}| > |I_{r_0} \cup I_{r_1}|;$
\item  sequence $\{|J_n|\}_{n<\omega}$ is increasing.
\end{itemize}

We now construct the sequence $v \in 2^{\omega}$. For every $n < \omega$, we write $$J_n=I_{r_0} \cup \ldots \cup I_{r_{k}}$$ as a sum of consecutive intervals of the form $I_k$, with increasing indices, and put:
 $$v \restriction{I_{r_0}} = w\restriction{I_{r_0}},$$
 $$v\restriction{I_{r_1}}=w^{op}\restriction{I_{r_1}},$$ 
 $$v\restriction{I_{r_2} \cup \ldots \cup I_{r_k}}=010101...01(0).$$

 Given (3.1), it is evident that 
\begin{equation}\widetilde{F}\subseteq \{x \in 2^{\omega}| \, \forall^{\infty}_{n<\omega}\, x \restriction{J_n} \neq v \restriction{J_n} \}. \end{equation}
Let $J_n=[a_n,b_n]$ for every $n$. 

\begin{lem}Assume we have sequences $x,y,z \in D^\infty$ such that $x\restriction{[a,b]} \equiv 0$, $y(b)=0$,  and $\phi(z)+\rho=\phi(x) + \phi(y)$, where $\rho \in \{0,1\}$. Then $y\restriction{[a,b-1]}=z\restriction{[a,b-1]}$.
\end{lem}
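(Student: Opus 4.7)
The plan is to pinpoint the bits $z(a),\dots,z(b-1)$ by multiplying the equation $\phi(z)=\phi(x)+\phi(y)-\rho$ by $2^b$ and then reducing modulo $2^{b-a}$. This operation should isolate exactly the block of bits we care about: contributions from positions $<a$ become multiples of $2^{b-a}$ and so drop out of the congruence, and the integer $2^b\rho$ also vanishes mod $2^{b-a}$.

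First I would split, for each $u\in\{x,y,z\}$, the product $2^b\phi(u)$ into an integer part $N_u=\sum_{i<b} u(i)\,2^{b-i-1}$ and a fractional part $f_u=\sum_{i\ge b} u(i)/2^{i-b+1}$, which lies in $[0,1)$ because $u\in D^\infty$ rules out $f_u=1$. The hypotheses $x\restriction{[a,b]}\equiv 0$ and $y(b)=0$ force $x(b)=y(b)=0$, so $f_x<1/2$ and $f_y<1/2$, and therefore $f_x+f_y<1$. This estimate is what allows me to apply $\lfloor\cdot\rfloor$ to the identity $2^b\phi(z)=2^b\phi(x)+2^b\phi(y)-2^b\rho$ termwise, yielding the integer equation $N_z=N_x+N_y-2^b\rho$.

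Next I would reduce this equation mod $2^{b-a}$. For $i<a$ the weight $2^{b-i-1}$ is divisible by $2^{b-a}$, and the remaining terms of $N_x$ vanish thanks to $x\restriction{[a,b-1]}\equiv 0$, so $N_x\equiv 0\pmod{2^{b-a}}$; likewise $2^b\rho\equiv 0\pmod{2^{b-a}}$. The surviving contributions in $N_y$ and $N_z$ are $\sum_{i=a}^{b-1}y(i)\,2^{b-i-1}$ and $\sum_{i=a}^{b-1}z(i)\,2^{b-i-1}$, both non-negative integers strictly less than $2^{b-a}$. Uniqueness of the binary representation of an integer in $[0,2^{b-a})$ then forces $z(i)=y(i)$ for every $i\in[a,b-1]$.

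The delicate point will be the inequality $f_x+f_y<1$: this is precisely where the hypothesis $y(b)=0$ enters, since otherwise a carry generated by digits of $x$ and $y$ from positions $>b$ could propagate into position $b-1$ and flip the corresponding bit of $z$. Once that carry is ruled out, everything else amounts to routine bookkeeping with dyadic integers.
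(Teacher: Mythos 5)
Your proof is correct and follows essentially the same approach as the paper's: both arguments come down to the observation that the contribution of positions $<a$ is a multiple of $2^{-a}$, the contribution of positions $>b$ is strictly less than $2^{-b}$ (no carry into position $b-1$, thanks to $x(b)=y(b)=0$ and $x,y\in D^\infty$), so the digits of $z$ on $[a,b-1]$ are forced to agree with those of $y$. Your version, phrased via $\lfloor 2^b\phi(\cdot)\rfloor$ and reduction modulo $2^{b-a}$, is simply a more explicit rendering of the paper's one-line carry estimate $\sum_{i=b+1}^{\infty}\frac{x(i)+y(i)}{2^{i+1}}<2^{-b}$.
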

\begin{proof}
\begin{align*}
\phi(x)+\phi(y) = \sum_{i=0}^{a-1}{\frac{x(i)}{2^{i+1}}}+
\sum_{i=a}^{b}{\frac{x(i)}{2^{i+1}}}+\sum_{i=b+1}^{\infty}{\frac{x(i)}{2^{i+1}}}&+\\
\sum_{i=0}^{a-1}{\frac{y(i)}{2^{i+1}}}+\sum_{i=a}^{b}{\frac{y(i)}{2^{i+1}}}+\sum_{i=b+1}^{\infty}{\frac{y(i)}{2^{i+1}}} &=\\
 \sum_{i=0}^{a-1}{\frac{x(i)+y(i)}{2^{i+1}}}+\sum_{i=a}^{b-1}{\frac{y(i)}{2^{i+1}}}+ 
\sum_{i=b+1}^{\infty}{\frac{x(i)+y(i)}{2^{i+1}}} &=\\ \sum_{i=0}^{\infty}{\frac{z(i)}{2^{i+1}}}+\rho.
\end{align*}
$\displaystyle{\sum_{i=b+1}^{\infty}{\frac{x(i)+y(i)}{2^{i+1}}} < \sum_{i=b+1}^{\infty}{\frac{1}{2^i}}=2^{-b}}$, so this part of the sum does not affect positions of $z$ with indices less than $b$.
\end{proof}

Let us now define
$$\widetilde{G}=\{x \in 2^{\omega} |\, \exists_{m<\omega} \, \textswab{U}_{n<\omega} \, x\restriction{[a_n,b_n-m]}\equiv 0\}.$$ 

In fact, we didn't rule out the possibility that $m>b_n-a_n$, but recall that $|a_n-~b_n|\rightarrow~ \infty$, so this situation can only occur on finitely many positions. This is clearly a~ subgroup of $2^{\omega}$, and $\widetilde{G}
 \supseteq \{x \in 2^\omega | \, \textswab{U}_{n<\omega} \, x \restriction{J_n} \equiv 0 \}$,
which, by Lemma~ 1, is not meagre, so we obtain that $\widetilde{G}$ is not meagre.
\par Group $\widetilde{G}$ defined this way can be ``transferred'' to $\mathbb{R}$, and in fact authors of \cite{r-s} apply this kind of idea. 
\begin{lem} $\phi[\widetilde{G}] +\mathbb{Z}$ is a subsemigroup of $\mathbb{R}$. 
\end{lem}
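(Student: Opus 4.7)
The plan is to verify closure of $\phi[\widetilde{G}]+\mathbb{Z}$ under addition. Given $y_1,y_2\in\widetilde{G}$ and $k_1,k_2\in\mathbb{Z}$, the real sum $\phi(y_1)+\phi(y_2)\in[0,2)$ has a unique representation $\phi(z)+\rho$ with $z\in D^\infty$ and $\rho\in\{0,1\}$, since $\phi$ is a bijection from $D^\infty$ onto $[0,1)$. Absorbing $\rho$ into the integer summand, the whole question reduces to showing that this $z$ lies in $\widetilde{G}$; everything else is bookkeeping.

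To establish $z\in\widetilde{G}$, I would combine the ultrafilter witnesses for $y_1$ and $y_2$. Fix $m_1,m_2<\omega$ with $\textswab{U}_n\, y_i\restriction{[a_n,b_n-m_i]}\equiv 0$ for $i=1,2$, and let $m=\max(m_1,m_2)$. By the filter property, $\textswab{U}_n$ both $y_1$ and $y_2$ vanish on $[a_n,b_n-m]$. For each such $n$ apply Lemma~2 with $a=a_n$, $b=b_n-m$, $x=y_1$, $y=y_2$: the hypotheses $x\restriction{[a,b]}\equiv 0$ and $y(b)=0$ are immediate, so $z\restriction{[a_n,b_n-m-1]}=y_2\restriction{[a_n,b_n-m-1]}\equiv 0$. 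This exhibits $m+1$ as a witness that $z\in\widetilde{G}$, which completes the argument.

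The essential difficulty---and the only real content---is that the ``carry'' coming from the tail $\sum_{i>b}(y_1(i)+y_2(i))/2^{i+1}$ must not corrupt the bits of $z$ on the window $[a_n,b_n-m-1]$ where we need them to vanish. This is precisely what Lemma~2 handles: the tail is strictly less than $2^{-b}$, and the condition $y_2(b_n-m)=0$ prevents a spurious carry from being generated at the right endpoint. Finally, one should note that only a subsemigroup (not a subgroup) is obtained: the additive inverse of $\phi(y)$ modulo $1$ is $1-\phi(y)$, which corresponds to bitwise complementation of $y$ and in general destroys the long zero-blocks required for membership in $\widetilde{G}$.
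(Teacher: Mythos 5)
Your proposal is correct and follows essentially the same route as the paper: reduce to a single equation $\phi(z)+\rho=\phi(y_1)+\phi(y_2)$, merge the two ultrafilter witnesses via $m=\max(m_1,m_2)$, and apply Lemma~2 on each window $[a_n,b_n-m]$ to get the witness $m+1$ for $z$. The check that $y_2(b_n-m)=0$ supplies the hypothesis $y(b)=0$ of Lemma~2 exactly as in the paper's argument.
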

\begin{proof}
Assume we have an equality $$\phi(g)+k_1+\phi(h)+k_2=\phi(f)+l,$$ where $g,h \in \widetilde{G}, \, k_1,k_2,l \in \mathbb{Z}$. We shall show that $f \in \widetilde{G}$. 
\par There exist integers $m_1,m_2$ such that $$\textswab{U}_n\, g\restriction{[a_n,b_n-m_1]\equiv 0}, \; \textswab{U}_n \, h\restriction{[a_n,b_n-m_2]\equiv 0},$$
so that if we set $\overline{m}$=max$\{m_1,m_2\}$, we obtain
$$\textswab{U}_n\, g\restriction{[a_n,b_n-\overline{m}]} \equiv h\restriction{[a_n,b_n-\overline{m}]} \equiv 0.$$
Then, from Lemma 2 we conclude that $\textswab{U}_n \, f \restriction[a_n,b_n-(\overline{m}+1)]\equiv 0$, which shows that $f \in \widetilde{G}$.
\end{proof}

\begin{lem} $\widetilde{G}+_2v, \, \widetilde{G}+_2v^{op}\subseteq D^\infty$, \\
 moreover $\phi[\widetilde{G}+_2v]+\mathbb{Z} \supseteq \phi[\widetilde{G}] + \phi[\widetilde{G}+_2v] + \mathbb{Z}$, \\and $\phi[\widetilde{G}+_2v^{op}]+\mathbb{Z} \supseteq \phi[\widetilde{G}] + \phi[\widetilde{G}+_2v^{op}] + \mathbb{Z}$.
\end{lem}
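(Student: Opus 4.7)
The plan is to split the claim into the two easy $D^\infty$-containments and the harder semigroup-style inclusion, and to exploit the alternating pattern of $v$ (and $v^{op}$) on the tail $I_{r_2}\cup\ldots\cup I_{r_k}$ of each $J_n$.

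For $\widetilde{G}+_2 v \subseteq D^\infty$: given $g\in\widetilde{G}$, fix $m$ with $\textswab{U}_n\, g\restriction [a_n,b_n-m]\equiv 0$. For such $n$ we have $(g+_2 v)\restriction [a_n,b_n-m]=v\restriction [a_n,b_n-m]$, and for $n$ large enough this interval meets the alternating tail of $J_n$, where $v$ contains plenty of $0$'s. Hence $g+_2 v$ has infinitely many $0$'s and lies in $D^\infty$. The $v^{op}$-case is identical, since $v^{op}$ also alternates on the tails.

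For the second inclusion, it is enough to take arbitrary $g\in\widetilde{G}$ and $h=g'+_2 v\in\widetilde{G}+_2 v$ and produce $f\in\widetilde{G}+_2 v$ and $\rho\in\{0,1\}$ with $\phi(g)+\phi(h)=\phi(f)+\rho$; any integer shift is then absorbed by $\mathbb{Z}$. Let $\overline{m}=\max(m_1,m_2)$ where $m_1,m_2$ witness membership of $g,g'$ in $\widetilde{G}$. For $\textswab{U}$-many $n$ both $g$ and $g'$ vanish on $[a_n,b_n-\overline{m}]$, whence $h\restriction [a_n,b_n-\overline{m}]=v\restriction [a_n,b_n-\overline{m}]$. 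The main observation is that for $n$ large enough one can pick $c_n\in\{b_n-\overline{m},\, b_n-\overline{m}-1\}$ with $v(c_n)=0$: both candidates lie in the alternating tail, where consecutive values of $v$ are distinct, so exactly one of them equals $0$.

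Applying Lemma 2 with $x=g$, $y=h$, $z=f$ and $b=c_n$ now yields $h\restriction [a_n,c_n-1]=f\restriction [a_n,c_n-1]$, and therefore
$$(f+_2 v)\restriction [a_n,c_n-1]=(h+_2 v)\restriction [a_n,c_n-1]=g'\restriction [a_n,c_n-1]\equiv 0.$$
Since $c_n-1\ge b_n-\overline{m}-2$, this gives $f+_2 v\in\widetilde{G}$, i.e., $f\in\widetilde{G}+_2 v$. The $\widetilde{G}+_2 v^{op}$ version is proved in exactly the same way, as $v^{op}$ also alternates on the tail. I expect the main bookkeeping nuisance to be the parity issue that forces $c_n$ to shift by at most one from $b_n-\overline{m}$; but since the tails of the partition grow without bound this shift costs only a bounded increment to the witness constant for membership in $\widetilde{G}$, and in particular does not interfere with the ultrafilter step.
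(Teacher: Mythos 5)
Your proof is correct and follows essentially the same route as the paper: reduce to producing $f$ with $\phi(g)+\phi(h)=\phi(f)+\rho$, use the alternating tail of $v$ to find a zero at some $c_n\in\{b_n-\overline{m},\,b_n-\overline{m}-1\}$, and apply Lemma 2 to transfer the vanishing of $g'$ to $f+_2v$ at the cost of a bounded shift of the witness constant. If anything, your instantiation of Lemma 2 with $y=h=g'+_2v$ is slightly more careful than the paper's (which writes $y=v$ even though the hypothesis $\phi(z)+\rho=\phi(x)+\phi(y)$ literally holds only for $y=g_2+_2v$).
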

\begin{proof}
The first part easily follows from the definitions. For the proof of the~~second part, assume we have a $z \in D^\infty$ with the property that $\phi(z)+\rho=\phi(g_1)+\phi(g_2+_2v)$, for some $g_1, g_2 \in \widetilde{G}$ and $\rho \in \{0,1\}.$ We'll show that $z \in \widetilde{G}+_2v$. 
\par Let $m_1,m_2 < \omega$ satisfy 
$$\textswab{U}_{n<\omega} \, g_i\restriction{[a_n,b_n-m_i]} \equiv 0,$$
for $i=0,1$, and $\overline{m}=\text{max}\{m_1,m_2\}$. Then 
$$\textswab{U}_{n<\omega}\, (g_2+_2v)\restriction{[a_n,b_n-\overline{m}]}=v\restriction{[a_n,b_n-\overline{m}]}$$ and
$$\textswab{U}_{n<\omega}\, g_1\restriction{[a_n,b_n-\overline{m}]}\equiv 0.$$

 Because $|a_n-b_n| \rightarrow \infty$, for sufficiently large $n$, $\frac{a_n+b_n}{2}\le b_n-\overline{m},$ which means that $b_n-\overline{m}$ lies in the right half of the interval $J_n$. Recall that $v$ was defined in such a way that in the right half of every interval $J_n$,  it is of the~form $0101010 \ldots1(0)$. Thus for sufficiently large $n\text{, either} \; v(b_n-\overline{m})=0$ or $v(b_n-(\overline{m}+1))=0$. From Lemma 2 (putting $z=z, x=g_1, y= v$), we infer that 
$$\textswab{U}_{n<\omega}\, z\restriction{[a_n,b_n - (\overline{m}+2)]}= v\restriction{[a_n,b_n -(\overline{m}+2)]}.$$
In conclusion $z+_2v\in \widetilde{G}$, which is exactly $z \in \widetilde{G}+_2v$. For  $v^{op}$ proof is the same, except that we replace every instance of $v$ with $v^{op}$. 
\end{proof}

Notice now, that
\begin{equation}(\widetilde{G}+v) \cap \widetilde{F} = \emptyset. \end{equation} 
This is true, because of (3.1) and the inclusion
$$(\widetilde{G}+v) \subseteq \{x \in 2^\omega | \, \exists^\infty_{n<\omega} \, x \restriction{I_n}=w\restriction{I_n}\}.$$
Set $H=\phi[\widetilde{G}]-\phi[\widetilde{G}]+\mathbb{Z}$. From Lemma 3 directly follows that it is a subgroup of $\mathbb{R}$. What's left, is to verify that
$$(\phi[\widetilde{G}]-\phi[\widetilde{G}]+\mathbb{Z}) \cap (F-\phi(v))=\emptyset.$$ 

Suppose to the contrary, that for some $g,h \in \widetilde{G}, \; k \in \mathbb{Z} \text{ and } f\in F$, an~equality

$$\phi(g)-\phi(h)+\phi(v)+k=f$$ 
holds.
From Lemma 4 we know, that $\phi(g)+\phi(v)+k=\phi(g'+_2v)+k'; \\ g'\in \widetilde{G},\; k' \in \mathbb{Z},$ so we obtain
\begin{equation}\phi(g'+_2v)-\phi(h)+k'=f.\end{equation}
One can verify that for any sequence $x \in D^\infty$, for which also $x^{op} \in D^\infty$, identity $\phi(x)+\phi(x^{op})=1$ holds. This allows us to write
\begin{equation}\phi(g'+_2v)=1-\phi((g'+_2v)^{op})=1-\phi(g'+_2v^{op}),\end{equation}

because $^{op}$ operation is just the addition of a constant sequence, and so plugging (3.5) into (3.4) yields

\begin{align*}f=1-\phi(g'+_2v^{op})-\phi(h)+k' &=\\
1-[\phi(g'+_2v^{op})+\phi(h)]+k' &=\\
k''+1-\phi(\overline{g}+_2v^{op}) &=\\
k''+\phi(\overline{g}+_2v).
\end{align*}
We used Lemma 4 in the third equality, and $\phi(\overline{g}+_2v)+\phi(\overline{g}+_2v^{op})=1$ in the last one.
Since $F+\mathbb{Z}=F$, we conclude that
$$\phi(\overline{g}+_2v) \in F\cap[0,1)=\phi[\widetilde{F}],$$
$$\overline{g}+_2v \in \widetilde{F},$$
which contradicts (3.3). Finally, let us notice that during the whole calculation arguments of $\phi$ were always within its domain, $D^\infty$.

\section{Open problems}

The proof of Theorem 2 uses the characterization of sets from the $\mathcal{M}(2^\omega)$ ideal, which turns out to match very well with the properties of ultrafilters. There's another class of sets with elegant combinatorial characterization, namely $\mathcal{E}$ -- the $\sigma$-ideal generated by closed sets of measure zero. The following theorem is due to Bartoszy\'nski and Shelah and is proved in \cite{b-j}.
\begin{thm}
$E \in \mathcal{E}(2^\omega)$ if and only if
$$E \subseteq \{x \in 2^\omega | \; \forall^\infty_{n<\omega} \;  x \restriction{I_n} \in K_n \},$$
where $\{I_n\}_{n<\omega}$ is a partition of $\omega$, $K_n \subseteq 2^{I_n}$ and $\forall_{n<\omega}\, \frac{|K_n|}{2^{|I_n|}} \le 2^{-n}$.
\end{thm}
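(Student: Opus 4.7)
The plan is to prove both containments. Let $\mathcal{J}$ denote the class of sets $E\subseteq 2^\omega$ contained in some set of the stated form; the goal is to show $\mathcal{J}=\mathcal{E}(2^\omega)$. The forward direction $\mathcal{J}\subseteq \mathcal{E}$ is obtained by writing $F = \{x \in 2^\omega|\, \forall^\infty_{n<\omega}\, x\restriction{I_n} \in K_n\} = \bigcup_{N<\omega} F_N$, where $F_N = \bigcap_{n\ge N}\{x|\, x\restriction{I_n} \in K_n\}$. Each $F_N$ is closed (an intersection of clopen sets), and by independence of the coordinates across the disjoint blocks $I_n$ one has $\mu(F_N) = \prod_{n\ge N} |K_n|/2^{|I_n|} \le \prod_{n\ge N} 2^{-n} = 0$, so $F \in \mathcal{E}$.

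For the reverse inclusion, I first cover any closed null $C \subseteq 2^\omega$ by a set of the required form. Inductively set $m_0=0$, and given $m_n$, pick a clopen $U_{n+1}\supseteq C$ with $\mu(U_{n+1})<2^{-n-m_n}$, then choose $m_{n+1}>m_n$ large enough that $U_{n+1}=\bigcup_{t\in T_{n+1}}[t]$ for some $T_{n+1}\subseteq 2^{m_{n+1}}$. Setting $I_n=[m_n,m_{n+1})$ and $K_n=\{t\restriction{I_n}|\, t\in T_{n+1}\}$, one gets $|K_n|/2^{|I_n|}\le |T_{n+1}|/2^{m_{n+1}-m_n} = \mu(U_{n+1})\cdot 2^{m_n}< 2^{-n}$, and every $x\in C\subseteq U_{n+1}$ satisfies $x\restriction{I_n} \in K_n$ for all $n$, so $C \in \mathcal{J}$.

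It remains to pass to countable unions. Given $E_m\subseteq F_m := \{x|\,\forall^\infty_{k<\omega}\, x\restriction{I^{(m)}_k} \in K^{(m)}_k\}$, I first re-group consecutive blocks within each partition to sharpen the bound to $|K^{(m)}_k|/2^{|I^{(m)}_k|}\le 2^{-(m+k+2)}$; this is possible because the ratio over a union of consecutive blocks equals the product of the original ratios, hence can be made arbitrarily small. Next, inductively build a partition $\{J_j\}$ with $J_j=[N_{j-1},N_j)$, choosing $N_j$ so large that for each $m\le j$ there exists $k(m,j)\ge j$ with $I^{(m)}_{k(m,j)}\subseteq J_j$, and set
$$L_j = \bigcup_{m\le j}\{s\in 2^{J_j}|\, s\restriction{I^{(m)}_{k(m,j)}} \in K^{(m)}_{k(m,j)}\}.$$
A union bound yields $|L_j|/2^{|J_j|}\le \sum_{m\le j} 2^{-(m+j+2)}\le 2^{-j}$, and any $x\in F_m$ eventually satisfies $x\restriction{I^{(m)}_{k(m,j)}}\in K^{(m)}_{k(m,j)}$, so $x\restriction{J_j} \in L_j$ for large $j$.

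The main obstacle is this countable-union step: one must simultaneously align the merging partition $\{J_j\}$ with every $\{I^{(m)}_\cdot\}$ while controlling $|L_j|/2^{|J_j|}$ below $2^{-j}$. The essential quantitative trick is the initial sharpening to $2^{-(m+k+2)}$, without which the union bound over $m$ would not close; combined with the diagonal interleaving, this simultaneously yields the size estimate and ensures that each $F_m$ is captured by the single set $\{x|\,\forall^\infty_{j<\omega}\, x\restriction{J_j} \in L_j\}$.
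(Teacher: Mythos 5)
Your proof is correct; note, though, that the paper itself gives no argument for this theorem --- it is quoted from Bartoszy\'nski--Judah \cite{b-j} --- and your three steps (closed intersections of clopen blocks for the easy direction, clopen approximation of a single closed null set with $m_{n+1}$ chosen after shrinking the cover below $2^{-n-m_n}$, and the diagonal regrouping with the sharpened $2^{-(m+k+2)}$ bound to absorb countable unions) reconstruct essentially the standard proof given there. All the quantitative estimates check out, so nothing further is needed.
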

  The following seems to be a reasonable question.
\begin{prob}
Let $E \in \mathcal{E}(2^\omega)$. Does there necessarily exists a dense non-measurable subgroup $G\le 2^\omega$, disjoint with some translation of $E$?
\end{prob}

Another related question was asked by Taras Banakh on the Mathoverflow. 

\begin{prob}
	Does there exist (in ZFC) a subgroup $G \le X$, such that \\$G \in \mathcal{N}\cap\mathcal{M}$, but $G \notin \mathcal{E}$?
\end{prob}

\end{document}